\newtheorem{theorem}{Theorem}[section]
\newtheorem{lemma}[theorem]{Lemma}
\newtheorem{example}[theorem]{Example}
\newtheorem{corollary}[theorem]{Corollary}
\newtheorem{remark}[theorem]{Remark}
\definecolor{pink}{rgb}{1, .75, .8}
\definecolor{lgrey}{gray}{.85}
\newcommand{\norm}[1]{\left\|#1\right\|}  
\begin{document}

\title[Proximal Voronoi Regions]{Proximal Vorono\"{i} Regions}

\author[J.F. Peters]{J.F. Peters}
\email{James.Peters3@umanitoba.ca}
\address{Computational Intelligence Laboratory,
University of Manitoba, WPG, MB, R3T 5V6, Canada}
\thanks{The research has been supported by the Scientific and Technological Research
Council of Turkey (T\"{U}B\.{I}TAK) Scientific Human Resources Development
(BIDEB) under grant no: 2221-1059B211402463 and Natural Sciences \&
Engineering Research Council of Canada (NSERC) discovery grant 185986.}

\subjclass[2010]{Primary 65D18; Secondary 54E05, 52C20, 52C22}

\date{}

\dedicatory{Dedicated to the Memory of Som Naimpally}

\begin{abstract}
A main result in this paper is the proof that proximal Vorono\"{i} regions are convex polygons.  In addition, it is proved that every collection of proximal Vorono\"{i} regions has a Leader uniform topology.  
\end{abstract}

\keywords{Convex polygon, proximal, Leader uniform topology, Vorono\"{i} region.}

\maketitle

\section{Introduction}
Klee-Phelps convexity~\cite{Klee1949,Phelps1957} and related results~\cite{PetersSG2014arXiv} are viewed here in terms of Vorono\"{i} regions, named after the Ukrainian mathematician Georgy Vorono\"{i}~\cite{Voronoi1903,Voronoi1907,Voronoi1908}.  A nonempty set $A$ of a space $X$ is a \emph{convex set}, provided $\alpha A + (1-\alpha)A\subset A$ for each $\alpha\in[0,1]$~\cite[\S 1.1, p. 4]{Beer1993}.  A \emph{simple convex set} is a closed half plane (all points on or on one side of a line in $R^2 $).  

\begin{lemma}\label{lem:convexity}~{\rm \cite[\S 2.1, p. 9]{Edelsbrunner2014}} The intersection of convex sets is convex.
\end{lemma}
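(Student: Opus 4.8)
The plan is to argue straight from the definition of convexity recalled in the introduction: a nonempty set $A$ is convex provided $\alpha A + (1-\alpha)A \subset A$ for every $\alpha \in [0,1]$, which is the same as requiring $\alpha x + (1-\alpha)y \in A$ whenever $x, y \in A$ and $\alpha \in [0,1]$. Accordingly, let $\{A_i : i \in I\}$ be an arbitrary family of convex subsets of the space $X$ and set $A = \bigcap_{i \in I} A_i$, assuming $A \neq \emptyset$ so that the conclusion is non-vacuous under the paper's conventions.

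First I would fix two arbitrary points $x, y \in A$ together with a scalar $\alpha \in [0,1]$. By the definition of intersection, $x \in A_i$ and $y \in A_i$ for every index $i \in I$. Next, since each $A_i$ is convex, the point $z = \alpha x + (1-\alpha)y$ lies in $A_i$ for every $i \in I$. Finally, because $z$ belongs to every member of the family, $z \in \bigcap_{i \in I} A_i = A$. As $x, y \in A$ and $\alpha \in [0,1]$ were arbitrary, this establishes $\alpha A + (1-\alpha)A \subset A$; hence $A$ is convex, which is the assertion of Lemma~\ref{lem:convexity}.

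I do not expect a genuine obstacle: the whole argument is the observation that the universal quantifier ``for all $i \in I$'' passes unchanged through the convexity step for each $A_i$. The only point worth a remark is the degenerate case of an empty intersection: under the paper's convention a convex set is required to be nonempty, so one either restricts attention to families whose intersection is nonempty or invokes the standard convention that $\emptyset$ counts as convex. In either reading the core of the proof is untouched, and the result applies in particular to the finite intersections of half planes that arise for Vorono\"{i} regions in the sequel.
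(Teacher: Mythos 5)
Your proof is correct and takes essentially the same route as the paper: a direct verification from the definition that any convex combination $\alpha x+(1-\alpha)y$ of two points of the intersection lies in every member of the family and hence in the intersection. The differences are only in care and generality — you handle an arbitrary family rather than the paper's two sets $A,B\subset\mathbb{R}^2$, you spell out the quantifier step that the paper compresses into the phrase ``the line segment $\overline{xy}$ belongs to $K$'' (the same condition in geometric language), and you flag the nonemptiness convention, which the paper ignores.
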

\begin{proof}
Let $A,B\subset \mathbb{R}^2$ be convex sets and let $K = A\cap B$.  For every pair points $x,y\in K$, the line segment $\overline{xy}$ connecting $x$ and $y$ belongs to $K$, since this property holds for all points in $A$ and $B$.   Hence, $K$ is convex.
\end{proof}

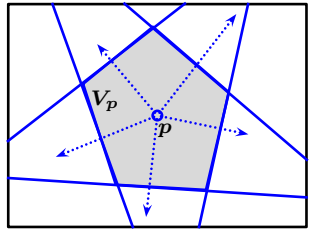
\begin{figure}[!ht]
\begin{center}
 \begin{pspicture}
 (0.0,1.0)(7.5,3.8)
\psset{linewidth=1pt,linecolor=blue}
\psframe[linecolor=black](2.5,0.5)(6.5,3.5)
\rput(4.5,2){\PstPentagon[yunit=1.2,fillstyle=solid,fillcolor=lgrey]}
\pscircle(4.5,2.0){0.08}
\psline[linestyle=dotted,dotsep=0.03]{->}(4.5,2.0)(4.35,0.65)
\psline[linestyle=dotted,dotsep=0.03]{->}(4.5,2.0)(3.70,2.95) 
\psline[linestyle=dotted,dotsep=0.03]{->}(4.5,2.0)(3.15,1.45)
\psline[linestyle=dotted,dotsep=0.03]{->}(4.5,2.0)(5.55,3.35)
\psline[linestyle=dotted,dotsep=0.03]{->}(4.5,2.0)(5.71,1.75)
\psline{-}(5.05,0.5)(5.71,3.5)
\psline{-}(4.05,3.5)(6.50,1.4) 
\psline{-}(2.5,1.17)(6.50,0.91) 
\psline{-}(3.1,3.5)(4.18,0.5) 
\psline{-}(2.5,1.65)(4.88,3.5) 
\rput(4.6,1.8){\footnotesize$\boldsymbol{p}$}
\rput(3.8,2.2){\footnotesize$\boldsymbol{V_p}$}
 \end{pspicture}
\end{center}
\caption[]{$V_p$ = Intersection of closed half-planes}
\label{fig:convexPolygon}
\end{figure}

Let $S\subset \mathbb{R}^2$ be a finite set of $n$ points called sites, $p\in S$.  The set $S$ is called the \emph{generating set}~\cite{Frank2010}. Let $H_{pq}$ be the closed half plane of points at least as close to $p$ as to $q\in S\setminus \{p\}$, defined by
\[
H_{pq } = \left\{x\in R^2:\norm{x - p} \mathop{\leq}\limits_{q\in S} \norm{x - q}\right\}.
\]
A \emph{convex polygon} is the intersection of finitely many half-planes~\cite[\S I.1, p. 2]{Edelsbrunner2001}.  See, for example, Fig.~\ref{fig:convexPolygon}.

\begin{remark} \rm
The Vorono\"{i} region $V_p$ depicted as the intersection of finitely many closed half planes in Fig.~\ref{fig:convexPolygon} is a variation of the representation of a Vorono\"{i} region in the monograph by H. Edelsbrunner~{\rm \cite[\S 2.1, p. 10]{Edelsbrunner2014}}, where each half plane is defined by its outward directed normal vector.  The rays from $p$ and perpendicular to the sides of $V_p$ are comparable to the lines leading from the center of the convex polygon in G.L. Dirichlet's drawing~\cite[\S 3, p. 216]{Dirichlet1850}.
\qquad \textcolor{black}{$\blacksquare$}
\end{remark}

\section{Preliminaries}
Let $S\subset E$, a finite-dimensional normed linear space.  Elements of $S$ are called sites to distinguish them from other points in $E$~\cite[\S 2.2, p. 10]{Edelsbrunner2014}.  Let $p\in S$.  A \emph{Vorono\"{i} region} of $p\in S$ (denoted $V_p$) is defined by
\[
V_p = \left\{x\in E: \norm{x - p} \mathop{\leq}\limits_{\forall q\in S} \norm{x - q} \right\}.
\]

\begin{remark} \rm
A Vorono\"{i} region of a site $p\in S$ contains every point in the plane that is closer to $p$ than to any other site in $S$~\cite[\S 1.1, p. 99]{Frank2010}.  Let $V_p,V_q$ be Vorono\"{i} polygons.  If $V_p\cap V_q$ is a line, ray or line segment, then it is called a \emph{Vorono\"{i} edge}.  If the intersection of three or more Vorono\"{i} regions is a point, that point is called a \emph{Vorono\"{i} vertex}.
\qquad \textcolor{black}{$\blacksquare$}
\end{remark}

\begin{lemma}\label{lem:convexPolygon}
A Vorono\"{i} region of a point is the intersection of closed half planes and each region is a convex polygon.  
\end{lemma}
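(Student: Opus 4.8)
The plan is to write $V_p$ explicitly as a finite intersection of closed half planes and then read off both assertions, using the definition of a convex polygon recalled above and Lemma~\ref{lem:convexity}. First I would note that, straight from the definition, a point $x$ lies in $V_p$ if and only if $\norm{x-p}\leq\norm{x-q}$ for every $q\in S$; the inequality for $q=p$ is automatic, so membership in $V_p$ is equivalent to $x\in H_{pq}$ for every $q\in S\setminus\{p\}$. Hence $V_p=\bigcap_{q\in S\setminus\{p\}}H_{pq}$, which already gives the first assertion provided each $H_{pq}$ really is a closed half plane.

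Next I would verify that $H_{pq}$ is a closed half plane. Working with the Euclidean norm, since both sides are nonnegative the defining inequality $\norm{x-p}\leq\norm{x-q}$ is equivalent to $\norm{x-p}^2\leq\norm{x-q}^2$; expanding via the inner product and cancelling the common term $\norm{x}^2$ rewrites this as the affine inequality $2\langle x,\,q-p\rangle\leq\norm{q}^2-\norm{p}^2$. Because $q\neq p$, the coefficient vector $q-p$ is nonzero, so the solution set is a closed half plane whose bounding line is the perpendicular bisector of the segment $\overline{pq}$; in particular $H_{pq}$ is convex. Since $S$ is finite, $V_p$ is then the intersection of the finitely many half planes $\{H_{pq}:q\in S\setminus\{p\}\}$, which is precisely the definition of a convex polygon.

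For convexity I would invoke Lemma~\ref{lem:convexity}: each $H_{pq}$ is convex, and a one-line induction on the number of sites promotes Lemma~\ref{lem:convexity} from two convex sets to an arbitrary finite family, so the intersection $V_p$ is convex. The only step I expect to require genuine care is the passage from the metric description of $H_{pq}$ to its affine half-plane description, namely the squaring-and-expanding computation and its implicit use of the inner-product (Euclidean) structure of the plane; the remaining steps are bookkeeping together with Lemma~\ref{lem:convexity}.
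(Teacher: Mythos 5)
Your proposal is correct and follows essentially the same route as the paper: write $V_p=\bigcap_{q\in S\setminus\{p\}}H_{pq}$, observe the intersection of finitely many closed half planes is by definition a convex polygon, and invoke Lemma~\ref{lem:convexity} for convexity. The extra work you do (the squaring computation showing $H_{pq}$ is bounded by the perpendicular bisector of $\overline{pq}$, and the finite induction extending Lemma~\ref{lem:convexity} beyond two sets) simply fills in details the paper treats as definitional or implicit.
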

\begin{proof}
From the definition of a closed half-plane 
\[
H_{pq } = \left\{x\in R^2:\norm{x - p} \mathop{\leq}\limits_{q\in S} \norm{x - q}\right\},
\]
$V_p$ is the intersection of closed half-planes $H_{pq}$, for all $q\in S-\left\{p\right\}$~\cite{Edelsbrunner2001}, forming a polygon.  From Lemma~\ref{lem:convexity}, $V_p$ is a convex.
\end{proof}

A Voronoi diagram of $S$ (denoted by $\mathbb{V}$) is the set of Voronoi regions, one for each site $p\in S$, defined by
\[
\mathbb{V} = \mathop{\bigcup}\limits_{p\in S} V_p.
\]

\begin{example} {\bf Centroids as Sites in an Image Tessellation}.\\
Let $E$ be a segmentation of a digital image and let $S\subset E$ be a set of sites, where each site is the centroid of a segment in $E$.  In a centroidal approach to the Vorono\"{i} tessellation of $E$, a Vorono\"{i} region $V_p$ is defined by the intersection of closed half plains determined by centroid $p\in S$.   The centroidal approach to Voronoi tessellation was introduced by Q. Du, V. Faber, M. Gunzburger~{\rm \cite{Du1999}}. \qquad \textcolor{black}{$\blacksquare$}
\end{example}

\section{Main Results}

Let $V_p,V_z$ be Vorono\"{i} regions of $p,z\in S$, a set of Vorono\"{i} sites in a finite-dimensional normed linear Space $E$ that is topological, $\mbox{cl}A$ the closure of a nonempty set $A$ in $E$.  $V_p,V_z$ are \emph{proximal} (denoted by $V_p\ \delta\ V_z$), provided $\mathbb{P} = \mbox{cl}V_p\cap \mbox{cl}V_z\neq \emptyset$~\cite{Concilio2009}.  The set $\mathbb{P}$ is called a \emph{proximal Vorono\"{i} region}.

\begin{theorem}
Proximal Vorono\"{i} regions are convex polygons.
\end{theorem}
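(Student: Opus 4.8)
The plan is to reduce the statement to Lemma~\ref{lem:convexPolygon} and Lemma~\ref{lem:convexity} by observing that the closure operation occurring in the definition of a proximal Vorono\"{i} region does nothing: each Vorono\"{i} region is already closed, so $\mathbb{P}$ is literally a finite intersection of closed half planes.

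First I would check that each Vorono\"{i} region $V_p$ is a closed set. By Lemma~\ref{lem:convexPolygon}, $V_p$ is the intersection of the half planes $H_{pq}$ over all $q\in S\setminus\{p\}$, and each $H_{pq}$ is closed, since it is defined by the non-strict inequality $\norm{x-p}\leq\norm{x-q}$ between continuous functions of $x$; an intersection of closed sets is closed, hence $\mbox{cl}\, V_p = V_p$ and likewise $\mbox{cl}\, V_z = V_z$. Therefore
\[
\mathbb{P} = \mbox{cl}\, V_p \cap \mbox{cl}\, V_z = V_p \cap V_z = \bigcap_{q\in S\setminus\{p\}} H_{pq}\;\cap\;\bigcap_{q\in S\setminus\{z\}} H_{zq},
\]
which exhibits $\mathbb{P}$ as the intersection of finitely many closed half planes. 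By the definition of a convex polygon (the intersection of finitely many half planes) this already gives the conclusion, and by Lemma~\ref{lem:convexity} applied to $V_p$ and $V_z$ — each convex by Lemma~\ref{lem:convexPolygon} — the set $\mathbb{P}$ is convex. Since $V_p\ \delta\ V_z$ means precisely $\mathbb{P}\neq\emptyset$, the polygon is nonempty and the statement is not vacuous.

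The step I would be most careful about is the degenerate geometry: two proximal Vorono\"{i} regions generically meet not in a full two-dimensional cell but in a Vorono\"{i} edge (a line, ray, or segment) or a Vorono\"{i} vertex (a point). I would therefore state explicitly that "convex polygon" is understood here to include these lower-dimensional degenerate polygons — a point and a segment are each the intersection of finitely many closed half planes and are trivially convex — so that the classification of $\mathbb{P}$ by dimension is subsumed under the conclusion. A secondary point worth flagging is the mild discrepancy between the ambient space $E$ (a finite-dimensional normed linear space) and $\mathbb{R}^2$ in the definition of $H_{pq}$; the argument uses only that half planes are closed and convex and that there are finitely many of them, so it carries over once "half plane" is read as "closed half space" in the ambient dimension.
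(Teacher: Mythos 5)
Your proof is correct and takes essentially the same route as the paper's: reduce the claim to Lemma~\ref{lem:convexity} and to the characterization of a convex polygon as a finite intersection of closed half planes (Lemma~\ref{lem:convexPolygon}). It is in fact more careful than the paper's own argument, since your observation that each Vorono\"{i} region is closed---so that $\mathbb{P}=\mbox{cl}\,V_p\cap \mbox{cl}\,V_z=V_p\cap V_z$ is literally the intersection of the two finite families of half planes $H_{pq}$ and $H_{zq}$---supplies the justification the paper omits for its assertion that $\mathbb{P}$ is a finite intersection of closed half planes (convexity alone does not yield this), while your explicit caveat that the intersection is generically a degenerate polygon (an edge or a vertex) is consistent with the paper's subsequent corollary and avoids the paper's questionable closing claim that $\mathbb{P}$ is itself a Vorono\"{i} region of a point.
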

\begin{proof}
Let $\mathbb{P}$ be a proximal Vorono\"{i} region.  By definition, $\mathbb{P}$ is the nonempty intersection of convex sets.  From Lemma~\ref{lem:convexity}, $\mathbb{P}$ is convex.   Consequently, $\mathbb{P}$ is the intersection of finitely many closed half planes.  Hence, from Lemma~\ref{lem:convexPolygon}, $\mathbb{P}$ is a Vorono\"{i} region of a point and is a convex polygon.
\end{proof}

\begin{corollary}
The intersection of proximal Vorono\"{i} regions is either a Vorono\"{i} edge or Vorono\"{i} point.
\end{corollary}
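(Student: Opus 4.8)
The plan is to analyze the intersection of two proximal Vorono\"{i} regions $\mathbb{P}_1 = \mbox{cl}V_p\cap\mbox{cl}V_z$ and $\mathbb{P}_2 = \mbox{cl}V_{p'}\cap\mbox{cl}V_{z'}$, so that
\[
\mathbb{P}_1\cap\mathbb{P}_2 \;=\; \mbox{cl}V_p\cap\mbox{cl}V_z\cap\mbox{cl}V_{p'}\cap\mbox{cl}V_{z'},
\]
which, by the Theorem together with Lemma~\ref{lem:convexity}, is a convex set; I assume it is nonempty, since otherwise there is nothing to prove. The whole argument rests on one elementary observation: if a point $x$ lies in $\mbox{cl}V_a\cap\mbox{cl}V_b$, then $\norm{x-a}\leq\norm{x-q}$ and $\norm{x-b}\leq\norm{x-q}$ for every $q\in S$, whence $\norm{x-a}=\norm{x-b}$, so $x$ sits on the perpendicular bisector of $\overline{ab}$. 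Thus the intersection is controlled by how many \emph{distinct} bisectors it is forced to meet, which in turn is governed by the number $k$ of distinct sites among $p,z,p',z'$ (the case $k=2$ also covering a single proximal Vorono\"{i} region).

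First I would treat $k=2$, where $\mathbb{P}_1=\mathbb{P}_2$ and the intersection equals $\mbox{cl}V_p\cap\mbox{cl}V_z$. By the observation this lies on a single line, and being a nonempty convex subset of a line it is a point, a line segment, a ray, or the entire line, i.e. a Vorono\"{i} edge (or a Vorono\"{i} point in the degenerate single-point case). Next, for $k\geq 3$ one can choose a site $a$ together with two further distinct sites $b\neq b'$ appearing in the list, so $\mathbb{P}_1\cap\mathbb{P}_2\subset\mbox{cl}V_a\cap\mbox{cl}V_b\cap\mbox{cl}V_{b'}$ lies on the intersection of the perpendicular bisectors of $\overline{ab}$ and $\overline{ab'}$; since $b\neq b'$ these are two distinct lines, so their intersection is empty or a single point, and by the nonemptiness assumption it is a single point. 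Being the intersection of three or more Vorono\"{i} regions, that point is a Vorono\"{i} vertex. The remaining case $k=4$ is subsumed, because a four-fold intersection lies inside a three-fold one.

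The step I expect to be the main obstacle is the bookkeeping around degenerate configurations: one must confirm that the two bisectors in the $k\geq 3$ case are genuinely distinct lines --- equivalently, that the reflection fixing such a line cannot carry $a$ to both $b$ and $b'$ unless $b=b'$ --- and one must reconcile the Remark's definitions so that the claimed dichotomy ``edge or point'' is exhaustive: a two-region intersection that degenerates to a single point has to be read as a Vorono\"{i} point rather than a Vorono\"{i} edge. A related subtlety worth flagging is that collinear sites yield parallel bisectors, so the three-fold intersection can be empty; this is why the statement is understood to concern a nonempty intersection.
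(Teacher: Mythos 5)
Your argument is correct under the intended Euclidean reading, and the comparison here is lopsided: the paper states this corollary with no proof at all, presenting it as an immediate consequence of the preceding theorem and of the Remark's definitions of Vorono\"{i} edge and vertex, so your bisector argument supplies content the paper does not contain. Your key observation --- that any point of $\mbox{cl}V_a\cap \mbox{cl}V_b$ satisfies $\norm{x-a}=\norm{x-b}$ and hence lies on the perpendicular bisector of $\overline{ab}$ --- is the standard route, and your split on the number $k$ of distinct sites covers both readings of the statement (the set $\mathbb{P}$ itself when $k=2$, and intersections of distinct proximal regions when $k\geq 3$), with the distinct-bisector reflection argument and the nonemptiness caveat handled correctly. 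Two small points. First, identifying the equidistant set with a line uses the Euclidean norm; in the paper's more general normed-space setting bisectors can be two-dimensional (for instance in the maximum norm), so both your proof and the statement itself must be read in $\mathbb{R}^2$ with the Euclidean metric, which is consistent with the Remark and the later corollaries about the plane. Second, in the degenerate $k=2$ case where $\mbox{cl}V_p\cap \mbox{cl}V_z$ is a single point $x$ you settle the matter by terminology (``Vorono\"{i} point''); if you want $x$ to be a Vorono\"{i} vertex in the Remark's sense, you can add that the finitely many closed regions cover the plane, regions whose closures miss $x$ stay at positive distance from $x$, and two closed convex sets meeting only at $x$ cannot cover a punctured disk around $x$ since that punctured disk is connected, so at least three closures meet at $x$. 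With those readings in place your dichotomy is exhaustive, and your proof is more rigorous than anything offered in the paper.
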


Any two Vorono\"{i} regions intersect at least a vertex and at most along their boundaries.   Together, the set of Vorono\"{i} regions $\mathbb{V}$ cover the entire plane~\cite[\S 2.2, p. 10]{Edelsbrunner2001}.   For a set of sites $S\subset E$, a Vorono\"{i} diagram $\mathbb{D}$ of $S$ is the set of Voronoi regions, one for each site in $S$.  

\begin{corollary}
A Vorono\"{i} diagram $\mathbb{D}$ equals $\mathbb{V}$.
\end{corollary}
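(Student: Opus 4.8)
The plan is to establish the set equality $\mathbb{D}=\mathbb{V}$ by a straightforward double inclusion, leaning on two facts already in hand: each Voronoi region is generated from the \emph{same} finite site set $S$, and the Voronoi regions of $S$ together cover all of $E$.

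First I would unwind the definitions: $\mathbb{V}$ is by construction the union $\bigcup_{p\in S}V_p$ of one region per site, and a Voronoi diagram $\mathbb{D}$ of $S$ is likewise ``the set of Voronoi regions, one for each site in $S$.'' So both objects are assembled from the identical family $\{V_p:p\in S\}$, and the only content of the theorem is that passing to the diagram and passing to the union yield the same subset of $E$. For $\mathbb{V}\subseteq\mathbb{D}$ I would take $x\in\mathbb{V}$, pick $p\in S$ with $x\in V_p$, and note that $V_p$ is one of the regions comprising $\mathbb{D}$. For the reverse inclusion I would take $x\in\mathbb{D}$ and use finiteness of $S$: the map $q\mapsto\norm{x-q}$ attains a minimum at some $p\in S$, so $\norm{x-p}\leq\norm{x-q}$ for every $q\in S$, giving $x\in V_p\subseteq\mathbb{V}$. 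Combining the two inclusions finishes the argument.

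The only real obstacle is bookkeeping about what the symbols denote: one must fix the convention that $\mathbb{D}$ is read as the point set covered by its regions (rather than as a labelled family) so that it is literally comparable with the union $\mathbb{V}$. With that convention in place, nothing beyond the already-established coverage of $E$ and the finiteness of $S$ is needed; in particular no appeal to convexity (Lemmas~\ref{lem:convexity}, \ref{lem:convexPolygon}) is required, though those results confirm that each $V_p$ entering the diagram is a genuine convex polygon.
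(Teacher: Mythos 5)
Your proposal is correct, and it is in essence the same observation the paper relies on: the corollary is stated without proof because the paper defines $\mathbb{V}$ and $\mathbb{D}$ by the verbatim same phrase, ``the set of Voronoi regions, one for each site $p\in S$'' (with $\mathbb{V}$ additionally written out as the union $\bigcup_{p\in S}V_p$), so the identity is definitional. Your double inclusion simply makes that explicit, and you correctly isolate the only genuine subtlety, namely whether $\mathbb{D}$ is read as a labelled family of regions or as the point set they cover; the paper itself silently conflates these two readings. One remark: your argument for $\mathbb{D}\subseteq\mathbb{V}$ via finiteness of $S$ and a nearest site $p$ with $\norm{x-p}\leq\norm{x-q}$ for all $q\in S$ proves more than is needed here --- once $\mathbb{D}$ is read as the union of the same family $\{V_p:p\in S\}$, both inclusions are immediate --- and what it actually establishes is that every point of $E$ lies in some $V_p$, which is the content of the paper's next corollary (that the Dirichlet tessellation covers $E$). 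So your proof is sound, and its extra step anticipates, rather than serves, the present statement.
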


The partition of a plane $E$ with a finite set of $n$ sites into $n$ Vorono\"{i} polygons is known as a Dirichlet tessellation, named after G.L. Dirichlet~\cite{Weisstein2014Voronoi} (see~\cite{Dirichlet1850}).   A \emph{cover} (covering) of a space $X$ is a collection $\mathcal{U}$ of subsets of $X$ whose union contains $X$ ({\em i.e.}, $\mathcal{U}\supseteq X$)~\cite[\S 15]{Willard1970},~\cite[\S 7.1]{Naimpally2013}.

\begin{corollary}
A Dirichlet tessellation $\mathbb{D}$ of the Euclidean plane $E$ is a covering of $E$.
\end{corollary}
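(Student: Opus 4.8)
The plan is to unwind the definition of a cover and reduce the claim to the statement that every point of the Euclidean plane lies in at least one Vorono\"{i} region of the generating set $S$. Recall that the Dirichlet tessellation $\mathbb{D}$ is, by construction, the collection $\left\{V_p : p\in S\right\}$ of Vorono\"{i} polygons, one for each site. So to show that $\mathbb{D}$ is a covering of $E$ in the sense defined just above, it suffices to verify that $\bigcup_{p\in S} V_p \supseteq E$; equivalently, that for each $x\in E$ there is some site $p\in S$ with $x\in V_p$.

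First I would fix an arbitrary point $x\in E$ and consider the real-valued function $q\mapsto \norm{x-q}$ on the site set $S$. Since $S$ is finite (a hypothesis built into the notion of generating set), this function attains its minimum at some site $p\in S$, so that $\norm{x-p}\leq \norm{x-q}$ for every $q\in S$. By the definition of the Vorono\"{i} region $V_p = \left\{x\in E:\norm{x-p}\leq \norm{x-q}\ \text{for all } q\in S\right\}$, this is exactly the assertion $x\in V_p$. As $x$ was arbitrary, $E\subseteq \bigcup_{p\in S} V_p = \mathbb{D}$, which is the defining condition for $\mathbb{D}$ to be a cover of $E$.

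A shorter route, once the preceding results are available, is simply to chain the two corollaries immediately above: $\mathbb{D} = \mathbb{V}$, and the set of Vorono\"{i} regions $\mathbb{V}$ covers the entire plane. Either way, the argument is essentially immediate, and I do not expect a genuine obstacle. The only point needing the slightest care is that a point equidistant from two or more nearest sites belongs to each of the corresponding regions; but this causes no difficulty, since a cover permits overlaps among its members. Thus finiteness of $S$ supplies the minimizing site, and the definition of $V_p$ finishes the proof.
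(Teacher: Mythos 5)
Your proposal is correct. Note that the paper itself offers no proof of this corollary: it is stated immediately after the remark (with citation to Edelsbrunner) that the set of Vorono\"{i} regions $\mathbb{V}$ covers the entire plane, together with the preceding corollary that $\mathbb{D}=\mathbb{V}$, so the paper's implicit argument is exactly your ``shorter route.'' Your first argument goes further and is the more valuable one: by fixing $x\in E$ and using finiteness of $S$ to produce a site $p$ minimizing $q\mapsto\norm{x-q}$, you actually prove the covering fact that the paper only cites, making the corollary self-contained from the definition of $V_p$ and the definition of a cover. Your remark about points equidistant from several nearest sites lying in more than one region is also apt, since a cover tolerates such overlaps. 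In short, your proof is correct, subsumes the paper's (unstated) reasoning, and supplies the elementary detail the paper delegates to the literature.
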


Recall that the Euclidean space $E = R^2$ is a metric space.  The topology in a metric space results from determining which points are close to each set in the space.  A point $x\in E$ is close to $A\subset E$, provided the Hausdorff distance $d(x,A) = inf\left\{\norm{x-a}: a\in A\right\} = 0$.  Let $X,Y$ be a pair of metric spaces, $f:X\longrightarrow Y$ is a function such that for each $x\in X$, there is a unique $f(x)\in Y$.  A continuous function preserves the closeness (proximity) between points and sets, {\em i.e.}, $f(x)$ is close to $f(B)$ whenever $x$ is close to $B$.  In a proximity space, one set $A$ is near another set $B$, provided $A\ \delta\ B$, {\em i.e.}, the closure of $A$ has at least one element in common with the closure of $B$.  The set $A$ is close to the set $B$, provided the \u{C}ech distance $D(A,B) = inf\left\{\norm{a-b}:a\in A, b\in B \right\} = 0$. In that case, we write $A\ \delta\ B$ ($A$ and $B$ are proximal). A \emph{uniformly continuous mapping} is a function that preserves proximity between sets, {\em i.e.}, $f(A)\ \delta\ f(B)$ whenever $A\ \delta\ B$.  A \emph{Leader uniform topology} is determined by finding those points that are close to each given set in $E$.

\begin{theorem}\label{thm:sites}
Let $S$ be a set of two or more sites, $p\in S, V_p\in \mathbb{D}$ in the Euclidean space $R^2$.  Then
\begin{compactenum}[1$^o$]
\item $V_p$ is near at least one other Vorono\"{i} region in $\mathbb{D}$.
\item Let $p,y$ be sites in $S$.  $\left\{y\right\}\ \delta\ \left\{p\right\}\Rightarrow \left\{y\right\}\ \delta\ V_p$.
\item $V_p$ is close to Vorono\"{i} region $V_y$ if and only if $d(x,V_y) = 0$ for at least one $x\in V_p$.
\item A mapping $f:V_p\longrightarrow V_y$ is uniformly continuous, provided $f(V_p)\ \delta\ f(V_y)$ whenever $V_p\ \delta\ V_y$.
\end{compactenum}
\end{theorem}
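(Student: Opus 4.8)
The plan is to dispatch the four assertions one at a time; each follows by unwinding a definition recalled just above the statement (the Hausdorff distance $d$, the \u{C}ech distance $D$, the proximity $\delta$, and the notion of a uniformly continuous mapping) together with two facts already in hand: by Lemma~\ref{lem:convexPolygon} every Vorono\"{i} region is a \emph{closed} convex polygon, and the members of $\mathbb{D}$ cover $R^2$.

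For 1$^o$ I would argue from connectedness of the plane. Set $W = \bigcup_{q\in S\setminus\{p\}} V_q$; since $S$ is finite, $W$ is a finite union of closed sets, hence closed, and since $\abs{S}\geq 2$ both $V_p$ and $W$ are nonempty. Because the regions cover the plane, $R^2 = V_p\cup W$. If $V_p$ were near no other region, then $\mbox{cl}V_p\cap\mbox{cl}V_q = V_p\cap V_q = \emptyset$ for every $q\in S\setminus\{p\}$, whence $V_p\cap W = \emptyset$; this would exhibit $R^2$ as the union of two disjoint nonempty closed sets, contradicting its connectedness. Hence $V_p\cap V_q\neq\emptyset$ for some $q\in S\setminus\{p\}$, i.e.\ $\mbox{cl}V_p\cap\mbox{cl}V_q\neq\emptyset$, so $V_p\ \delta\ V_q$.

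The remaining parts are immediate. For 2$^o$: singletons are closed in the metric space $R^2$, so $\{y\}\ \delta\ \{p\}$ forces $\{y\}\cap\{p\}\neq\emptyset$, i.e.\ $y = p$; and since $\norm{p-p} = 0\leq\norm{p-q}$ for every $q\in S$ we have $p\in V_p$, so $\{y\} = \{p\}\subseteq V_p$ and therefore $\mbox{cl}\{y\}\cap\mbox{cl}V_p\ni p$, giving $\{y\}\ \delta\ V_p$. For 3$^o$: $V_p$ and $V_y$ are closed, so $V_p\ \delta\ V_y$ holds iff $V_p\cap V_y\neq\emptyset$; if $x$ lies in this intersection then $x\in V_p$ and $d(x,V_y) = 0$, while conversely $d(x,V_y) = 0$ with $x\in V_p$ forces $x\in\mbox{cl}V_y = V_y$, so again $x\in V_p\cap V_y$. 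For 4$^o$: $V_p$ and $V_y$ are subsets of the metric space $R^2$, so the definition of a uniformly continuous mapping applies verbatim to $f:V_p\longrightarrow V_y$, and it says precisely that $f$ is uniformly continuous when $f(V_p)\ \delta\ f(V_y)$ whenever $V_p\ \delta\ V_y$. The only step carrying real content is 1$^o$, and the one place needing care there is the bookkeeping that legitimizes the separation argument: that $W$ is closed (finiteness of $S$), and that both $V_p$ and $W$ are nonempty proper subsets of $R^2$ (because $\abs{S}\geq 2$).
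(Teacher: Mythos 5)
Your argument is correct, and on the only substantive item, 1$^o$, it takes a genuinely different route from the paper. The paper's proof of 1$^o$ simply picks a second site $y$ "such that $V_p, V_y$ have at least one closed half plane in common" and concludes $V_p\ \delta\ V_y$ --- that is, it asserts the existence of an adjacent region outright, leaving the key geometric fact unproved. You instead derive it topologically: with $W=\bigcup_{q\in S\setminus\{p\}}V_q$ closed (finitely many closed regions, via Lemma~\ref{lem:convexPolygon}), nonempty, and $R^2=V_p\cup W$ by the covering property quoted in the paper, disjointness of $V_p$ from every other region would disconnect $R^2$; connectedness forces $\mbox{cl}V_p\cap\mbox{cl}V_q\neq\emptyset$ for some $q\neq p$. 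This buys actual rigor where the paper hand-waves, at the modest cost of invoking connectedness of the plane and the covering fact (both available in the text); the paper's version, if fleshed out, would instead rely on the adjacency structure of the Vorono\"{i} diagram (shared edges/bisectors). Parts 2$^o$--4$^o$ are in both treatments the same definitional unwinding: $\{y\}\ \delta\ \{p\}$ forces $y=p$ and $p\in V_p$, closedness of the regions makes 3$^o$ an equivalence between nonempty intersection and vanishing of $d(x,V_y)$, and 4$^o$ is the definition of uniform continuity (preservation of proximity) read off verbatim, exactly as in the paper.
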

\begin{proof}$\mbox{}$\\
1$^o$: Assume $S$ contains at least 2 sites.  Let $p\in S, y\in S\setminus \{y\}$ such that $V_p,V_y$ have at least one closed half plane in common.  Then $V_p\ \delta\ V_y$.\\
2$^o$: If $\left\{y\right\}\ \delta\ \left\{p\right\}$, then $\norm{y-p} = 0$, since $y\in \left\{y\right\}\ \cap\ \left\{p\right\}$.  Consequently, $\left\{y\right\}\ \cap\ \mbox{cl}(V_p)\neq \emptyset$. Hence, $\left\{y\right\}\ \delta\ \mbox{cl}(V_p)$.\\
3$^o$: $V_p\ \delta\ V_y\Leftrightarrow\ \mbox{exists}\ x\in \mbox{cl}(V_p)\ \cap\ \mbox{cl}(V_y)\Leftrightarrow\ d(x,V_y) = 0$.\\
4$^o$: Let $f(V_p)\ \delta\ f(V_y)$ whenever $V_p\ \delta\ V_y$.  Then, by definition, $f:V_p\longrightarrow V_y$ is uniformly continuous.
\end{proof}

\begin{theorem}
Every collection of proximal Vorono\"{i} regions has a Leader uniform topology (application of~{\rm \cite{Leader1959}}).
\end{theorem}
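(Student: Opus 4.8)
The plan is to exhibit an arbitrary collection $\mathcal{C}$ of proximal Vorono\"{i} regions as a subspace of the proximity space $(R^2,\delta)$ and then apply Leader's construction~\cite{Leader1959} of a uniformity, hence of a uniform topology, from a proximity. First I would record that $R^2$ equipped with the \u{C}ech proximity $A\ \delta\ B \Leftrightarrow D(A,B)=0 \Leftrightarrow \mbox{cl}A\cap \mbox{cl}B\neq\emptyset$ is a separated (Efremovi\v{c}) proximity space: reflexivity, symmetry, the union (additivity) axiom and the Efremovi\v{c} strong axiom all hold because $\delta$ is the metric proximity induced by the Euclidean norm on $R^2$, the canonical example of such a space.

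Next, let $\mathcal{C}$ be any collection of proximal Vorono\"{i} regions and set $\mathbb{P}=\bigcup_{V\in\mathcal{C}}V\subseteq R^2$. By Theorem~\ref{thm:sites}($1^o$), each member of $\mathcal{C}$ is near at least one other Vorono\"{i} region, so $\mathcal{C}\neq\emptyset$ and every witnessing set $\mbox{cl}V_p\cap \mbox{cl}V_z\neq\emptyset$ lies in $\mathbb{P}$; thus the restriction $\delta_{\mathbb{P}}$ of $\delta$ to subsets of $\mathbb{P}$ is again a separated proximity, since a subspace of a proximity space is a proximity space. Using Theorem~\ref{thm:sites}($3^o$), the point-to-set closeness described in the paper, namely $x$ close to $A$ iff $d(x,A)=0$, is precisely the derived relation $\{x\}\ \delta_{\mathbb{P}}\ A$; hence $A\mapsto\{x\in\mathbb{P}:d(x,A)=0\}$ is the Kuratowski closure operator of $\delta_{\mathbb{P}}$ and determines the proximity topology on $\mathbb{P}$.

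Finally, I would invoke Leader's theorem: the proximity $\delta_{\mathbb{P}}$ is induced by a (totally bounded) uniformity $\mathcal{U}_{\delta_{\mathbb{P}}}$ — the Leader uniformity of $\mathbb{P}$ — whose uniform topology coincides with the proximity topology of the previous paragraph, i.e.\ with the topology obtained by finding the points close to each given set in $\mathbb{P}$. That uniform topology is the Leader uniform topology carried by the collection $\mathcal{C}$, which is exactly what the statement asserts.

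The step I expect to be the main obstacle is the passage to the subspace: one has to be sure that restricting the metric proximity of $R^2$ to the (closed, convex, possibly unbounded) polygonal pieces that make up $\mathbb{P}$ preserves the Efremovi\v{c} strong axiom and stays compatible with the subspace metric topology. Here Theorem~\ref{thm:sites} does the work — parts $1^o$ and $3^o$ guarantee that $\mathbb{P}$ is a genuine nonempty proximity subspace of $R^2$ on which Leader's construction applies verbatim — so that, beyond this verification, the argument reduces to a routine appeal to the Efremovi\v{c}--Smirnov and Leader machinery for proximity and uniform spaces.
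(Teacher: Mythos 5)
Your proposal is correct, but it reaches the conclusion by a genuinely different route than the paper. You keep the topology at the level of points of the plane: the union $\mathbb{P}=\bigcup_{V\in\mathcal{C}}V$ inherits the metric (\u{C}ech) proximity of $R^2$ as a subspace proximity, and the standard Efremovi\v{c}--Smirnov--Leader machinery then produces a (totally bounded) uniformity whose uniform topology is the proximity topology, i.e.\ the topology obtained by finding which points of $\mathbb{P}$ are close to each given subset --- which is exactly the paper's working definition of a Leader uniform topology. The paper instead builds a topology on the collection itself: for each $V_p\in\mathbb{D}$ it forms the family of regions near $V_p$, takes $\tau$ to be the resulting collection of families together with $\emptyset$ and $\mathbb{D}$, and verifies closure of $\tau$ under intersection and union using Theorem~\ref{thm:sites}.1$^o$, so its ``points'' are Vorono\"{i} regions rather than points of $R^2$. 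Your route buys rigor and generality --- every step is a standard fact about EF-proximities, subspace proximities, and the induced uniformity, and your worry about the subspace step is easily discharged since restriction of a metric proximity to any nonempty subset again satisfies the Efremovi\v{c} axiom --- whereas the paper's route buys a topology directly on $\mathbb{D}$, closer to the literal phrase ``collection of proximal regions,'' at the price of a looser verification (it is not spelled out why intersections or arbitrary unions of the chosen families are again families of the prescribed form). Two minor slips on your side that do not affect correctness: Theorem~\ref{thm:sites}.1$^o$ is not what makes $\mathcal{C}$ nonempty (that is built into the hypothesis that the regions are proximal), and the uniformity-from-proximity step is really Smirnov's theorem, with~{\rm \cite{Leader1959}} supplying the cluster-based development; the paper's own citation is equally loose on this point.
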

\begin{proof}
Assume $\mathbb{D}$ has more than one Vorono\"{i} region. For each $V_p\in \mathbb{D}$, find all $V_y\in \mathbb{D}$ that are close to $V_p$.  For each  $V_p$, this procedure determines a family of Vorono\"{i} regions that are near $V_p$.   Let $\tau$ be a collection of families of proximal Vorono\"{i} regions.  Let $A,B\in \tau$.  $A\cap B\in \tau$, since either $A\cap B = \emptyset$ or, from Theorem~\ref{thm:sites}.l$^o$, there is at least one Vorono\"{i} region $V_p\in A\cap B$, {\em i.e.}, $V_p\ \delta\ A$ and $V_p\ \delta\ B$.  Hence, $A\cap B\in\tau$.  Similarly, $A\cup B\in \tau$, since $V_p\ \delta\ A$ or $V_p\ \delta\ B$ for each $V_p\in A\cup B$.  Also, $\mathbb{D},\emptyset$ are in $\tau$.   Then, $\tau$ is a Leader uniform topology in $\mathbb{D}$.
\end{proof}


\end{document}